\newtheorem{theorem}{Theorem}[section]
\newtheorem{example}{Example}[section]
\newtheorem{lemma}{Lemma}[section]
\newenvironment{proof}[1][Proof]{\textbf{#1.} }{\ \rule{0.5em}{0.5em}}
\DeclareMathOperator{\T}{T}
\DeclareMathOperator{\Cov}{Cov}
\DeclareMathOperator{\Diag}{Diag}
\DeclareMathOperator{\I}{I}
\DeclareMathOperator{\Prob}{P}
\DeclareMathOperator{\Bigo}{O}
\DeclareMathOperator{\Normal}{N}
\DeclareMathOperator{\spectralGap}{gap}
\DeclareMathOperator{\e}{e}
\DeclareMathOperator{\sign}{sign}
\newcommand{\N}{\mathbb{N}}
\newcommand{\Covtrue}{\Sigma}
\numberwithin{equation}{section}
\begin{document}
%--
\title{Quantifying the Estimation Error of Principal Components}
\author{Raphael Hauser
\thanks{Mathematical Institute, Radcliffe Observatory Quarter, Woodstock Road, Oxford OX2 6GG, U.K.;
Alan Turing Institute, British Library, 96 Euston Road, London NW1
2DB, U.K.; Pembroke College, St Aldates, Oxford OX1 1DW, U.K.;
hauser@maths.ox.ac.uk}, J\"uri Lember
\thanks{Institute of Mathematics and Statistics, University of Tartu, Tartu, Estonia
},
Heinrich Matzinger
\thanks{School of Mathematics, Georgia Institute of Technology, Atlanta, GA 30332, USA.
matzi@math.gatech.edu},
Raul Kangro
\thanks{Institute of Mathematics and Statistics, University of Tartu, Tartu, Estonia
}
}

\maketitle
\begin{abstract}{\small Principal component analysis is an important pattern recognition and dimensionality reduction
tool in many applications. Principal components are computed as
eigenvectors of a maximum likelihood covariance $\widehat{\Covtrue}$
that approximates a population covariance $\Covtrue$, and these
eigenvectors are often used to extract structural information about
the variables (or attributes) of the studied population. Since PCA
is based on the eigendecomposition of the proxy covariance
$\widehat{\Covtrue}$ rather than the ground-truth $\Covtrue$, it is
important to understand the approximation error in each individual
eigenvector as a function of the number of available samples. The
combination of recent results in \cite{karimvlad2} and
\cite{yu-wang-samworth} yields such bounds. In the present paper we
sharpen these bounds and show that eigenvectors can often be
reconstructed to a required accuracy from a sample of strictly
smaller size order.}
\end{abstract}

%%%%%%%%%%%
\section{Introduction}
%%%%%%%%%%%

Consider a random row vector $\vec{X}=[X_1,X_2,\ldots,X_p]$, defined over a probability space
$(\Omega,\Prob)$ and representing a data sample of $p$
different items of interest, such as the returns of $p$ different financial assets over a given investment
period, the relative frequencies of $p$ different words in a randomly chosen text, or the expression rates of
$p$ different genes in a cell line exposed to a randomly chosen chemical compound. In many
applications the data are approximately Gaussian with some unknown ground-truth covariance
matrix $\Covtrue=\Cov(\vec{X})$.
The subspaces spanned by the eigenvectors corresponding to the $k$ largest eigenvalues of
$\Covtrue$ serve as best representation of the data in a $k$-dimensional space and represent an
important dimensionality reduction technique. Moreover, the eigenvectors themselves are used to
reveal underlying structure hidden in the data, such as subsets of genes that tend to be jointly expressed, clusters
of texts that belong to a same category, risk factors that drive a financial market and many other
quantities of interest in numerous applications. Often, relevant hidden structure is revealed by the
eigenvectors corresponding to the second to $k$-th largest eigenvalues of $\Covtrue$, but not by the leading
eigenvector. For example, the relative size of components in the leading eigenvector of the covariance
matrix of monthly returns of S\&P500 stocks are approximating the relative weights of the market
capitalizations of these companies, hence this vector reproduces the market index. The components of the
next few leading eigenvectors reveal nearly market neutral investment portfolios, that is, portfolios that are
relatively unaffected by market shocks. Other examples occur in meteorological data, where the second to $k$-th
leading principal components are known to be useful in the automatic detection of nascent storm systems,
and in the natural language processing context, where these vectors can be used to cluster texts by
topic. In genome expression data, all eigenvalues of $\Covtrue$ and associated eigenvectors (respectively, the
singular spectrum and associated singular vectors) can be metabolistically relevant, see e.g.\ \cite{alter2, alter3, alter4}.
For this reason, we are interested in approximating the first few leading eigenvectors of $\Covtrue$,
not just the first.

In practice, a population covariance $\Covtrue$ is of course rarely available and in all likelihood only
exists in the mind of the human modeller. True data are not exactly Gaussian either,  nor can they be expected
to have covariances that are invariant over time. In these circumstances it is customary to collect sample
data vectors $\vec{X}^{(1)},\dots,\vec{X}^{(n)}$, assumed to be i.i.d.\ and taken from the same underlying
distribution. Upon centralizing\footnote{The data are centralized by subtracting their mean and
considering the new data vectors $\vec{Z}^{(i)}=\vec{X}^{(i)}-\frac{1}{n}\sum_{i}\vec{X}^{(i)}$, so that the
expectation of $\vec{Z}^{(i)}$ may be assumed to be negligible for $n$ large enough. For applications in which
the signal to noise ratio is low, such as in daily returns of financial assets, this centralisation step is not strictly
required.}, the data vectors may be assumed to have zero mean. A spectral decomposition of the maximum
likelihood covariance\footnote{For notational simplicity we will use the maximum likelihood covariance throughout
this paper instead of the unbiased covariance estimator, since the two matrices only differ by the factor
$n/(n-1)$ and we assume $n$ to be typically of order $\Bigo(10^2)$ or larger.}
\begin{equation*}
\widehat{\Covtrue}:=
\frac{1}{n}\sum_{i=1}^{n}\left[\vec{X}^{(i)}\right]^{\T}\vec{X}^{(i)}
\end{equation*}
is then used as a proxy for $\Covtrue$, and the leading part of the spectral decomposition of $\widehat{\Covtrue}$
is computed. This approach is called {\em principal component analysis} (PCA). In a situation
where $\Covtrue$ changes over time, such as in finance, one can only avail a limited number of data
points that can meaningfully be considered to have been sampled from the same underlying distribution, and since
the PCA is computed from the maximum likelihood covariance $\widehat{\Covtrue}$, it is important to understand
to what accuracy the computed principal components approximate the eigenvectors of the ground-truth
$\Covtrue$. Thus, we need to study how many sample points
suffice to approximate the eigenvectors of $\Covtrue$ by the eigenvectors of $\widehat{\Covtrue}$ to a given
accuracy. This is an interesting question, as we shall see that not all eigenvectors require the same
number of samples, and that the required sample size depends on the distribution of eigenvalues of the
population covariance $\Covtrue$.

We will henceforth assume that $\vec{X}^{(i)}$, $(i=1,\dots,n)$ are i.i.d.\ copies of
$\Normal(0,\Covtrue)$ random row vectors. Combining these vectors into a data matrix $X$ by row stacking, i.e.,
$\vec{X}^{(i)}$ is taken as the $i$-th row of $X$, the maximum likelihood covariance is given by
$\widehat{\Covtrue}=\frac{1}{n}X^{\T} X$
and determines the estimation error $E:=\widehat{\Covtrue}-\Covtrue$. Let $\Covtrue =
Q\Diag\left(\lambda_1,\dots,\lambda_p\right)Q^{\T}$
be a spectral decomposition of the population covariance in which the eigenvalues appear in
non-increasing
order  $\lambda_1\geq\dots\geq\lambda_p\geq 0$. For ease of exposition, we will now change coordinates and
express all vectors with respect to the basis given by the columns of the orthogonal factor $Q$, that is, the
eigenvectors of $\Covtrue$. Instead of working with the data vectors $\vec{X}^{(i)}$, we thus work with
$\vec{Y}^{(i)}=\vec{X}^{(i)}Q$ and may assume that $\Covtrue=\Diag(\lambda_1,\dots,\lambda_p)$ and
\begin{equation*}
\widehat{\Covtrue}=\frac{1}{n}\sum_{i=1}^n\left[\vec{Y}^{(i)}\right]^{\T}\vec{Y}^{(i)}.
\end{equation*}
This transformation entails neither a change of the operator norm of the estimation error $E$, nor of the
eigenvalues of $\Covtrue$
or $\widehat{\Covtrue}$, nor of any inner products between vectors, as $Q$ is orthogonal, but the new coordinate
system renders the analysis notationally
simpler, since all off-diagonal coefficients of $\Covtrue$ are zero and its eigenvectors
are the canonical unit vectors $\vec{\mu}_i$, $(i=1,\dots,p)$.

In a recent paper that builds on deep results from the theory of random matrices,  Koltchinskii \& Lounici proved
the existence of a universal constant $C_1$, independent of $p$ and $n$, such that the operator norm of
$E=\widehat{\Covtrue}-\Covtrue$ is probabilistically bounded as follows,
\begin{equation}\label{prenana}
\Prob\left[\|E\|\leq C_1\|\Covtrue\|\max\left(\sqrt{\dfrac{r(\Covtrue)}{n}},\dfrac{r(\Covtrue)}{n},\sqrt{\dfrac{t}{n}},\dfrac{t}{n}\right)\right]\geq1-\e^{-t},\ t\geq 1
\end{equation}
where
\begin{equation*}
r(\Covtrue)=\dfrac{\sum_{j=1}^p\lambda_j}{\max_{j=1,\dots,p}\lambda_j}=\dfrac{\sum_{j=1}^p\lambda_j}{\lambda_1}
\end{equation*}
is the {\em effective rank} of $\Covtrue$; see Corollary 2 in \cite{karimvlad2}. % Vladimir Koltchinskii and Karim Lounici, Concentration inequalities and moment bounds for sample covariance operators, Bernoulli Volume 23, Number 1 (2017), 110-133.
Note that $r(\Covtrue)\leq p$, so that for $n\geq \max{(p,t)}$ \eqref{prenana} becomes
\begin{equation*}
\Prob\left[\|E\|\leq C_1\|\Covtrue\|\max\left(\sqrt{\dfrac{r(\Covtrue)}{n}},\sqrt{\dfrac{t}{n}}\right)\right]\geq1-\e^{-t},
\end{equation*}
and using $\|\Covtrue\|=\lambda_1$ and the definition of $r(\Covtrue)$, this yields the bound
\begin{equation}\label{nanana}
\Prob\left[\|E\|\leq C_1\frac{\sqrt{\lambda_1}}{\sqrt{n}}\times\max\left(\sqrt{\lambda_1t},
\sqrt{\sum_{j=1}^p\lambda_j}\right)\right]\geq 1-\e^{-t}.
\end{equation}
Previous bounds on $\|E\|$ had only been known for the case where
all eigenvalues are of the same order or when $p$ remains bounded as
$n$ tends to infinity, the so-called {\em finite-dimensional case},
while the result  in  \cite{karimvlad2} applies to the {\em
infinite-dimensional case}, where $p$ tends to infinity as well.

In this paper we study the error that is incurred in approximating individual eigenvectors
of the population covariance $\Covtrue$ by computing a spectral decomposition of the maximum likelihood
covariance matrix $\widehat{\Covtrue}$. Let $\widehat{\lambda}_i$ be the $i$-th largest eigenvalue of
$\widehat{\Covtrue}$ and $\vec{\eta}_i$ an associated eigenvector. For convenience, we assume throughout
that the corresponding eigenvalue $\lambda_{i}$ of the population covariance is non-repeated.
It follows from Corollary 1 of \cite{yu-wang-samworth} %Yu, Y.\ and Wang, T.\ and Samworth, R.J.\, A useful variant of the Davis-Kahan theorem for statisticians, Biometrika (2015), pp 1-9, 2015. DOI: 10.1093/biomet/asv008
that the principal angle $\theta(\omega)=\angle(\vec{\mu}_i,\vec{\eta}_i(\omega))$ is bounded by
\begin{equation}\label{yu's bound}
\sin\theta(\omega)\leq\dfrac{2\|E(\omega)\|}{\spectralGap_i}\quad(\omega\in\Omega),
\end{equation}
where $\|E(\omega)\|$ is the operator norm of $E=\Covtrue-\widehat{\Covtrue}(\omega)$ and
$\spectralGap_i:=\min_{j\neq i}|\lambda_i-\lambda_j|$ is the spectral gap at eigenvalue $\lambda_i$ of $\Covtrue$.
Note that in contrast to the classical Davis-Kahan $\sin\theta$-Theorem \cite{davis-kahan} % Davis, C. and Kahan, W.M., The rotation of eigenvectors by a pertubation. III. SIAM J. Numer. Anal. 7, 1–46, 1970
the denominator of \eqref{yu's bound} does not depend on the (stochastic) spectrum of $\widehat{\Covtrue}$ but only on
the (deterministic) spectrum of $\Covtrue$.

A combination of
\eqref{yu's bound} with \eqref{nanana} shows that for a fixed $q\in(0,1),\,t\geq 1$, $p\in\N$ and
\begin{equation}\label{caffebean}
n\geq\max\left(p,\;t,\;\dfrac{4C_1^2\,\lambda_1^2\,\,t}{q^2\,\spectralGap^2_i},\dfrac{4C_1^2\,\lambda_1\,\,\sum_{j=1}^p\lambda_j}{q^2\,\spectralGap^2_i}\right),
\end{equation}
we have
\begin{equation*}
\Prob\left[\sin\theta< q\right]\geq 1-\e^{-t}.
\end{equation*}
The aim of the present paper is to show that the bound \eqref{caffebean} on the required sample size
can be replaced by
\begin{equation}\label{improved bound}
n\geq\max\left(p, t, M\times 16C_1^2\max\left((1+\frac{\lambda_i}{\spectralGap_{i}} ),\frac{2\lambda_i}{ q^2\spectralGap_{i}}\right)\,
\max\left((1+\frac{\lambda_i}{\spectralGap_{i}})\,t,\frac{3}{2}\sum_{j\not=i}\frac{\lambda_j}{|\lambda_j-\lambda_{i}|}\right)\right),
\end{equation}
where $M>0$ is a constant that depends on the distribution of the
spectrum $(\lambda_j)$.  For some distributions of the
eigenvalues of $\Covtrue$, this bound is a strict improvement over
the previously known ones. The following is our main result:
%......
 \begin{theorem}\label{main}
For fixed $q\in(0,1)$, $t\geq 1$, $p\in\N$, $i\in \{1,\ldots,p\}$ and $n\geq \max(t,p,\Psi(q,t,i))$ we have
\begin{equation}\label{kaffeebohne}
\Prob\left[\sin\theta\leq q\;\Bigr|\;n\geq \Psi\left(q,t,i^*\right)\right]
\geq 1-\dfrac{p\e^{-t}}{\Prob\left[n\geq\Psi(q,t,i^*)\right]},
\end{equation}
where $i^*\in\arg\min_{k\in\{1,\dots,p\}}|\lambda_k - \widehat{\lambda}_{i}|$ and
\begin{equation*}
\Psi(q,t,i):=16C_1^2\max\left((1+\frac{\lambda_i}{\spectralGap_{i}} ),
2\frac{\lambda_i}{q^2\spectralGap_{i}}\right)\,\max\left((1+\frac{\lambda_i}{\spectralGap_{i}})\,t,\frac{3}{2}\sum_{j\not=i}\frac{\lambda_j}{|\lambda_j-\lambda_{i}|}\right).
\end{equation*}
\end{theorem}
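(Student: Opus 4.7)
The strategy combines a refined perturbation analysis for $\vec{\eta}_i$ with a concentration inequality that controls only the $i$-th column of $E$, rather than the full operator norm $\|E\|$ used in \eqref{caffebean}. The random index $i^*$ enters because a Davis--Kahan-type argument needs $|\lambda_k-\widehat{\lambda}_i|$ to be bounded away from zero for every $k\neq i^*$, and the ``closest eigenvalue'' definition of $i^*$ supplies exactly this via the triangle inequality. A union bound over the $p$ possible values of $i^*$ produces the factor $p$ in \eqref{kaffeebohne}.

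\textbf{Step~1 (perturbation).} Writing $\vec{\eta}_i=\sum_k c_k\vec{\mu}_k$ with $\sum_k c_k^2=1$ so that $\sin^2\theta=\sum_{k\neq i}c_k^2$, the eigenvalue equation $(\Covtrue+E)\vec{\eta}_i=\widehat{\lambda}_i\vec{\eta}_i$ together with $\vec{\mu}_k^{\T}\Covtrue\vec{\mu}_j=\lambda_j\delta_{kj}$ yields $c_k(\lambda_k-\widehat{\lambda}_i)=-(E\vec{\eta}_i)_k$. On the event $\{i^*=i\}$ the triangle inequality forces $|\lambda_k-\widehat{\lambda}_i|\geq|\lambda_k-\lambda_i|/2$ for every $k\neq i$. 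Decomposing $\vec{\eta}_i=c_i\vec{\mu}_i+\vec{v}^{\perp}$ with $\|\vec{v}^{\perp}\|=\sin\theta$, writing $(E\vec{\eta}_i)_k=c_iE_{ki}+(E\vec{v}^{\perp})_k$, bounding the tangential contribution by a weighted operator norm of $E$ controlled by \eqref{nanana}, and absorbing this into the left-hand side produces
\begin{equation*}
\sin^2\theta\leq C_0\sum_{k\neq i}\frac{E_{ki}^2}{(\lambda_k-\lambda_i)^2}
\end{equation*}
for an absolute constant $C_0$, on a high-probability event compatible with $n\geq\Psi(q,t,i)$.

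\textbf{Step~2 (concentration of the weighted column).} For $k\neq i$, $E_{ki}=\frac{1}{n}\sum_l Y^{(l)}_kY^{(l)}_i$ is centred and bilinear in mutually independent Gaussian coordinates. Conditionally on $(Y^{(l)}_i)_{l=1}^n$, the vector $(E_{ki}/|\lambda_k-\lambda_i|)_{k\neq i}$ is Gaussian with independent entries of variance $\lambda_k\sum_l(Y^{(l)}_i)^2/\bigl((\lambda_k-\lambda_i)^2n^2\bigr)$, so its squared norm has conditional expectation $\bigl(\sum_l(Y^{(l)}_i)^2/n^2\bigr)\sum_{k\neq i}\lambda_k/(\lambda_k-\lambda_i)^2$. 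Since $\sum_l(Y^{(l)}_i)^2\sim\lambda_i\chi_n^2$, a Hanson--Wright (or Bernstein) tail bound applied conditionally, a $\chi^2$-tail bound applied unconditionally, and the substitution $(\lambda_k-\lambda_i)^2\geq\spectralGap_i\,|\lambda_k-\lambda_i|$ in the expectation term give
\begin{equation*}
\sum_{k\neq i}\frac{E_{ki}^2}{(\lambda_k-\lambda_i)^2}\leq\frac{C\lambda_i}{n\,\spectralGap_i}\left(\tfrac{3}{2}\sum_{k\neq i}\frac{\lambda_k}{|\lambda_k-\lambda_i|}+\bigl(1+\tfrac{\lambda_i}{\spectralGap_i}\bigr)t\right)
\end{equation*}
with probability at least $1-\e^{-t}$. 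Forcing $C_0$ times this bound to be $\leq q^2$ and collecting factors reproduces the threshold $n\geq\Psi(q,t,i)$ term-by-term.

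\textbf{Step~3 (union bound over $i^*$).} If $i^*=k\neq i$, the defining property of $i^*$ combined with Weyl's inequality yields $\|E\|\geq|\widehat{\lambda}_i-\lambda_i|\geq|\lambda_k-\lambda_i|/2$, and a routine constant-chasing argument shows that the hypothesis $n\geq\Psi(q,t,k)$ is strong enough for \eqref{nanana} to ensure $\Prob[\|E\|\geq|\lambda_k-\lambda_i|/2]\leq\e^{-t}$. Summing this bound over the $p-1$ indices $k\neq i$ and adding the $\e^{-t}$ failure probability from Step~2 on $\{i^*=i\}$ gives $\Prob[\sin\theta>q,\,n\geq\Psi(q,t,i^*)]\leq p\,\e^{-t}$, and \eqref{kaffeebohne} then follows by division by $\Prob[n\geq\Psi(q,t,i^*)]$. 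The most delicate obstacle I anticipate is the concentration of Step~2: the weighted quadratic form mixes two independent layers of Gaussian randomness (the $i$-th coordinate across samples and the remaining coordinates), and squeezing out the sharper $\sum_{j\neq i}\lambda_j/|\lambda_j-\lambda_i|$ dependence (strictly finer than $(1/\spectralGap_i)\sum_j\lambda_j$ for non-uniform spectra) with the precise numerical constant $\tfrac{3}{2}$ appearing in $\Psi$ requires careful bookkeeping of the tail bounds.
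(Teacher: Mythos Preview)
Your three-step plan has the right shape, but two of the steps diverge from the paper in ways that leave genuine gaps.

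\textbf{Step 1 and the tangential term.} You write that the contribution of $E\vec v^{\perp}$ is ``bounded by a weighted operator norm of $E$ controlled by \eqref{nanana}'' and then absorbed. But \eqref{nanana} controls only the \emph{unweighted} operator norm $\|E\|$; using it here reproduces the coarse bound \eqref{caffebean}, not the refined $\Psi$. The paper's mechanism for the absorption is different and is in fact the central idea: it rewrites the perturbation equation exactly as a Neumann series in $\Lambda_{i^*}=|D_{i^*}|^{1/2}E|D_{i^*}|^{1/2}$ and observes that $\Lambda_{i^*}$ is itself a covariance-estimation error for the \emph{rescaled} vector $\Diag(\nu)\vec Y$ with $\nu_j\propto|\lambda_j-\lambda_{i^*}|^{-1/2}$. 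A second application of Koltchinskii--Lounici \eqref{nanana} to this rescaled problem (Lemma~\ref{rescaling lemma}) then controls $\|E(\nu)\|$, which simultaneously bounds $\|\Lambda_{i^*}\|$ (needed for the Neumann series to converge, i.e.\ your ``absorption'') and the weighted column $\|\vec V_{i^*}\|$ (your Step~2). The effective rank of the rescaled covariance is precisely $\sum_{j\neq i^*}\lambda_j/|\lambda_j-\lambda_{i^*}|$, which is where the improved sum in $\Psi$ comes from. Your Hanson--Wright route for Step~2 could in principle recover the column bound, but it does not touch the tangential operator $\|\Lambda_{i^*}\|$, so Step~1 remains incomplete.

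\textbf{Step 3 is not justified.} You claim that $n\geq\Psi(q,t,k)$ is strong enough for \eqref{nanana} to force $\|E\|<|\lambda_k-\lambda_i|/2$. This is false in general: the threshold for $\|E\|<\delta$ via \eqref{nanana} scales like $\lambda_1\max(\lambda_1 t,\sum_j\lambda_j)/\delta^2$, whereas $\Psi(q,t,k)$ scales with $\lambda_k/\spectralGap_k$ and $\sum_{j\neq k}\lambda_j/|\lambda_j-\lambda_k|$; for $\lambda_k\ll\lambda_1$ the latter can be much smaller. The paper avoids this problem by \emph{not} trying to show $\Prob[i^*\neq i]$ is small. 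Instead it conditions on each event $\{i^*=k\}$, notes that the rescaling vector $\nu$ becomes the deterministic $\nu(k)$, applies Lemma~\ref{rescaling lemma} unconditionally to $E(\nu(k))$, and then uses $\Prob[A\mid i^*=k]\geq 1-\Prob[A^c]/\Prob[i^*=k]$ to pass to the conditional bound. Summing over the at most $p$ admissible values of $k$ is what produces the factor $p$ in \eqref{kaffeebohne}; it is a sum of conditional failure probabilities, not a union bound on $\{i^*\neq i\}$.
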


Note that for fixed $p$ and nonrepeated $\lambda_i$, we have $i^*=i$ for $n$ large enough with high probability.
In all relevant contexts to which Theorem \ref{main} applies, one can usually quantify
\begin{equation*}
\Prob\left[n\geq\Psi(q,t,i^*)\right]\approx 1
\end{equation*}
as being close to $1$ for $n\geq M\times\Psi(q,t,i)$ quite easily, where the constant $M>0$ depends on the distribution of
the spectrum $(\lambda_j)$. Substituting such a quantitative bound into \eqref{kaffeebohne}, this yields a lower bound on
\begin{align*}
\Prob\left[\sin\theta\leq q\right]&\geq
\Prob\left[\sin\theta\leq q\;\Bigr|\;n\geq\Psi(q,t,i^*)\right]\times
\Prob\left[n\geq\Psi(q,t,i^*)\right]\\
&\stackrel{\eqref{kaffeebohne}}{\geq}\Prob\left[n\geq\Psi(q,t,i^*)\right]-p\e^{-t}
\end{align*}
for all $n$ that satisfy the bound \eqref{improved bound}.
Since a comprehensive treatment of all cases is not a reasonable undertaking, we limit the scope of this paper to the
derivation of the bound \eqref{kaffeebohne} and leave the quantification of the constant $M$ to users of Theorem \ref{main}.

For the purposes of our analysis we will choose the eigenvector $\vec{\eta}_i$ such that
$\langle \vec{\mu}_i,\vec{\eta}_i\rangle> 0$ and $\vec{\eta}_i\in\vec{\mu}_i + (\vec{\mu}_i)^{\perp}$, a choice that is
possible with probability $1$, and we shall write
$\Delta \vec{\mu}_i = \vec{\eta}_i-\vec{\mu}_i$, so that
\begin{equation}\label{halloechen}
\sin\theta(\omega)\leq\tan\theta(\omega)=\|\Delta\vec{\mu}_i\|_2.
\end{equation}
Therefore, it suffices to prove that
\begin{equation}\label{scopeStar}
\Prob\left[\|\Delta\vec{\mu}_i\|_2<q\,\Big|\, n\geq\max\left(pt,\,\Psi\left(q,t,i^*\right)\right)\right]\geq
1-\dfrac{p\e^{-t}}{\Prob\left[n\geq\Psi(q,t,i^*)\right]}
\end{equation}
to establish Theorem \ref{main}.

For eigenvalues $\lambda_i$ of lower order, the new bound \eqref{improved bound} is of strictly smaller order than
\eqref{caffebean}, because the numerator satisfies $\lambda_{i}\ll\lambda_1$, and furthermore,
the eigenvalues of largest order in the sum $\sum_{j=1}^p \lambda_j$ are replaced by terms of $\Bigo(1)$ in the sum
$\sum_{j\neq i}\lambda_j/|\lambda_j-\lambda_{i}|$, so that this sum is generally much smaller when the
eigenvalue $\lambda_{i}$ is of medium (but not smallest) order. Note that, like the $\sin\theta$-Theorem of
Yu-Wang-Samworth \cite{yu-wang-samworth}, our bound \eqref{improved bound} only depends on the spectrum of
the population covariance, but not on the maximum likelihood covariance.

\begin{example}
In data science one often deals with data matrices whose scree plots have a short and quickly decreasing
initial section, followed by a large slowly decreasing middle section with close to equally spaced eigenvalues,
and if the population covariance is a noisy version of a rank $k<p$ matrix, then this is followed by a sharp drop
off to background noise after position $k$. For an example, see e.g. \cite{alter1} or \cite{webpage} % http://www.polarmicrobes.org/dimension-reduction-with-pca/}.

A reasonable model is therefore to assume that for some fixed $\beta\in(0,1)$, $\Covtrue$ has $\Bigo(1)$
eigenvalues of order $\Bigo(p)$ that are structurally not important, $\Bigo(p^{1-\beta})$ eigenvalues of order
$\Bigo(p^\beta)$, and $\Bigo(p)$ smaller eigenvalues of order $\Bigo(1)$,
where $\Covtrue$ has been rescaled so that $\sum_{j=1}^p \lambda_j =p$. The latter assumption is reasonable,
as PCA is often carried out on the correlation matrix rather
than the covariances, to correct for differences in scale between the feature variables.

The relevant structural information is often revealed by eigenvectors corresponding to eigenvalues of the intermediate
order $\Bigo(p^{\beta})$ rather than those of the largest order $\Bigo(p)$. For example, when PCA is used to classify
texts into different topics, the principal components that correspond to the largest order of eigenvalues typically pick up
high frequency words that are common in all texts of the same language, whereas the topic is typically identified by
context-specific words that merely appear with higher frequency than in the general text population.

Assuming that the eigenvalues of order $\Bigo(p^\beta)$ are locally not very different
from a renewal process, or similarly, that they are approximately located on a lattice, their spectral gaps
are typically of order $\Bigo\bigl(p^\beta/p^{1-\beta}\bigr)=\Bigo(p^{2\beta-1})$.
In the case where $\lambda_i$ is of order $\Bigo(p^{\beta})$, this yields
\begin{align}
\sum_{\lambda_j=\Bigo(p)}\dfrac{\lambda_j}{\left|\lambda_j-\lambda_i\right|}&=\Bigo\left(1\right),\nonumber\\
\sum_{\lambda_j=\Bigo\left(p^\beta\right),\,j\not=i}\dfrac{\lambda_j}{\left|\lambda_j-\lambda_i\right|}
&=\Bigo\left(\frac{p^\beta}{p^{2\beta-1}}\times \log p\right)=\Bigo\left(\log p \times p^{1-\beta}\right),\label{hansjoggeli}\\
\sum_{\lambda_j=\Bigo(1)}\dfrac{\lambda_j}{|\lambda_j-\lambda_i|}
&=\Bigo\left(p\times p^{-\beta}\right)=\Bigo\left(p^{1-\beta}\right). \nonumber
\end{align}
where the left-hand side of \eqref{hansjoggeli} is a harmonic series under the assumption that the eigenvalues
of order $\Bigo(p^{\beta})$ lie on a lattice. Summing all three bounds, we obtain
\begin{equation*}
\sum_{j=1,\,j\not=i}^p \dfrac{\lambda_j}{\left| \lambda_j -\lambda_i\right|}=\Bigo\left(p^{1-\beta}\times\log p\right).
\end{equation*}

Substituting this bound into \eqref{improved bound}, we find that
the number of samples required to guarantee that $\sin\theta<q$ with
probability larger than $1-\varepsilon$ when $\lambda_{i}$
corresponds to one of the eigenvalues of intermediate order
$\Bigo(p^{\beta})$ is for $\beta>\frac{1}{2}$ given by
\begin{align*}
n&=\Bigo\left(\max(-\log \varepsilon+\log p,p, p^{1-\beta}\times p^{1-\beta}\times \log p)\right)=\Bigo\left(\max(\log p,p,p^{2(1-\beta)}\log p) \right)=\Bigo(p),
\end{align*}
 while the bound \eqref{caffebean} would imply that the number of
samples required is of the order
\begin{equation*}
n=\Bigo\left(\max\big(-\log p ,p, \dfrac{\log p\times p^2 }{p^{4\beta-2}}\times\dfrac{p^2}{p^{4\beta-2}}\big)\right)=
\Bigo\left( p^{4(1-\beta)}\log p\right)
\end{equation*}
which is of strictly larger order for $\beta<\frac{3}{4}$.
Thus, for $\frac{1}{2}<\beta<\frac{3}{4}$ we can reconstruct the
relevant principal components with linearly many samples, whereas
the bound \eqref{caffebean} would have suggested that the number of
samples required is superlinear in the number $p$ of feature
variables.
\end{example}

%%%%%%%%%%%%%%%
\section{Outline of the Proof}
%%%%%%%%%%%%%%%

Before we proceed with a detailed proof of Theorem \ref{main}, it is
helpful to outline of the main ideas. In what follows,  we shall
assume that $\lambda_i$ is unrepeated, and we will write
$\Delta\lambda_{k,i}=\widehat{\lambda}_i-\lambda_k$, where $k$ is a
fixed index that we shall specify later. In analogy to the earlier
construction of the vector $\Delta\vec{\mu}_i$, let $\vec{\eta}_i$
be an eigenvector of $\widehat{\Covtrue}$ associated with the $i$-th
largest eigenvalue $\widehat{\lambda}_i$, and chosen such that
$\langle\vec{\mu}_k,\vec{\eta}_i\rangle\geq0$ and
$\vec{\eta}_i\in\vec{\mu}_k+\vec{\mu}_k^{\perp}$, and let us write
$\Delta\vec{\mu}_{k,i}:=\vec{\eta}_i-\vec{\mu}_k$, so that
$\Delta\vec{\mu}_{k,i}\in\vec{\mu}_k^{\perp}$. In the special case
where $k=i$ we denote $\Delta\vec{\mu}_{i}=\Delta\vec{\mu}_{i,i}$
and  $\Delta\lambda_i:=\Delta\lambda_{i,i}$.

For fixed $p$ and $k=i$,
\begin{equation}\label{approxerroreigen}
\Delta\vec{\mu}_i\approx
-\sqrt{\dfrac{\lambda_i}{n}}\; Z_i,
\end{equation}
where $Z_i=[Z_{1,i},\dots,Z_{p,i}]$ is a random vector of size $p$ with coefficients
\begin{equation*}
Z_{j,i}=\begin{cases}\dfrac{\sqrt{\lambda_j}}{\lambda_j-\lambda_i}\;N_{j,i},&\quad(j\neq i),\\
0,&(j=i),
\end{cases}
\end{equation*}
and where the random variables $N_{j,i}$ converge (jointly, in
distribution) to independent standard Gaussians as
$n\rightarrow\infty$ (see, e.g. \cite{anderson}, Thm. 13.5.1). Thus,
to guarantee that $\|\Delta\vec{\mu}_i\|<q$ with high probability,
we need $\sqrt{\lambda_i/n}\times\|Z_i\|<q$ with high probability.
Assuming $n$ to be large enough for the variables $N_{i,j}$ to be
close to i.i.d.\ standard Gaussians and the approximation
\eqref{approxerroreigen} to hold, one finds
\begin{equation*}
\|\Delta\vec{\mu}_i\|^2\approx\dfrac{1}{n}\times\sum_{j\neq i}
\frac{\lambda_i\lambda_j}{(\lambda_j-\lambda_i)^2}N_{j,i}^2
\end{equation*}
and this approximation can be used to bound $P(\|\Delta\mu_i\|<q)$.
In the present case, when $p$ is not fixed, this approach will not
holds, but let us gain a quick understanding of how
\eqref{approxerroreigen} arises in the finite-dimensional case in
which $n$ is left to tend to infinity for fixed $p$, and of how the
argument has to be amended in the infinite-dimensional case, where
$p$ is allowed to tend to $\infty$ at a linear rate in $n$: We have
\begin{align}
\Covtrue\;\vec{\mu}_k&=\lambda_k\vec{\mu}_k,\label{cocoisteincacaesser}\\  % alles klar
\widehat{\Covtrue}\;[\vec{\mu}_k+\Delta\vec{\mu}_{k,i}]&=(\lambda_k+\Delta\lambda_{k,i})
[\vec{\mu_k}+\Delta\vec{\mu}_{k,i}].\label{thehiddenmessageforraphael} % hab verstanden
\end{align}
Subtracting \eqref{cocoisteincacaesser} from \eqref{thehiddenmessageforraphael} and using
$\widehat{\Covtrue}=\Covtrue+E$ yields
\begin{equation}\label{coco}
[\Covtrue-(\lambda_k+\Delta\lambda_{k,i})\I_p]\;\Delta\vec{\mu}_{k,i}+
E\Delta\vec{\mu}_{k,i}=-E\vec{\mu}_k
+\Delta\lambda_{k,i}\vec{\mu}_k
\end{equation}
where $\I_p$ is the $p\times p$ identity matrix. In the finite-dimensional case, where $p$ is fixed and $n$
tends to infinity, taking $k=i$ implies that $E$, $\Delta\vec{\mu}_{i}$ and $\Delta\lambda_{i}$ are all of order
$1/\sqrt{n}$, and hence the terms $\Delta\lambda_i\Delta\vec{\mu}_i$ and $E\Delta\vec{\mu}_i$ are of the smaller
order $1/n$ and can be neglected in the asymptotics, so as to give the approximation
\begin{equation}\label{coco2}
[\Covtrue-\lambda_i\I_p]\;\Delta\vec{\mu}_i
\approx-E\vec{\mu}_i
+\Delta\lambda_i \vec{\mu}_i
\end{equation}
Using the facts that $\Covtrue=\Diag(\lambda_j)$, $\vec{\mu}_i$ is the $i$-th unit vector and that $\vec{\mu}_i$ and
$\Delta\vec{\mu}_i$ are mutually orthogonal by construction, the $i$-th equation of system \eqref{coco2} yields
\begin{equation*}
\Delta\lambda_i\approx E_{ii},
\end{equation*}
and dividing the $j$-th equation of the system \eqref{coco2} by $\lambda_j-\lambda_i$ $(j\neq i)$ yields
\begin{equation}\label{FINITE}
\Delta\vec{\mu}_i\approx -\begin{pmatrix}
\dfrac{E_{1,i}}{\lambda_1-\lambda_i}\\
\dfrac{E_{2,i}}{\lambda_2-\lambda_i}\\
\vdots\\
\dfrac{E_{(i-1),i}}{\lambda_{i-1}-\lambda_i}\\
0\\
\dfrac{E_{(i+1),i}}{\lambda_{i+1}-\lambda_i}\\
\vdots\\
\dfrac{E_{p,i}}{\lambda_p-\lambda_i}
\end{pmatrix}
=-\sqrt{\dfrac{\lambda_i}{n}}\begin{pmatrix}
\dfrac{\sqrt{\lambda_1}}{\lambda_1-\lambda_i}N_{1,i}\\
\dfrac{\sqrt{\lambda_2}}{\lambda_2-\lambda_i}N_{2,i}\\
\vdots\\
\dfrac{\sqrt{\lambda_{i-1}}}{\lambda_{i-1}-\lambda_i}N_{(i-1),i}\\
0\\
\dfrac{\sqrt{\lambda_{i+1}}}{\lambda_{i+1}-\lambda_i}N_{(i+1),i}\\
\vdots\\
\dfrac{\sqrt{\lambda_p}}{\lambda_p-\lambda_i}N_{p,i}
\end{pmatrix},
\end{equation}
where
\begin{equation}\label{define N}
N_{s,t}:=\sqrt{\dfrac{n}{\lambda_s \lambda_t}}E_{s,t}
\end{equation}
for all $s,t\in 1,\ldots,p$ with $s\neq t$. The random variables $N_{ji}$ $(j\neq i)$
converge in joint distribution to i.i.d.\ standard Gaussians, a fact we only mention for motivational purposes and on
which our technical argument below does not rely.

In contrast, in the infinite-dimensional case the terms $\Delta\lambda_{k,i}\Delta\vec{\mu}_{k,i}$ and
$E\Delta\vec{\mu}_{k,i}$ can no longer be asymptotically disregarded, as $p$ is also allowed to tend to infinity
at up to a linear rate in $n$.
Let $P_k$ denote the orthogonal projection into the orthogonal complement $\vec{\mu}_k^{\perp}$ of
$\vec{\mu}_k$, and define the operator
\begin{align*}
D_k:=-{\rm diag}\Big({1\over \lambda_1-\hat{\lambda}_i},\ldots,{1\over
    \lambda_{k-1}-\hat{\lambda}_i},0,{1\over
    \lambda_{k+1}-\hat{\lambda}_i},\ldots,{1\over \lambda_p-\hat{\lambda}_i}\Big),
\end{align*}
so that $D_k\,\vec{\mu}_k=0$ and $D_k$ is well defined
as long as
% $\lambda_k$ has multiplicity $1$ and
$\hat{\lambda}_i$ is not an eigenvalue (except, possibly, the $k-$th eigenvalue)
of $\Covtrue$. Using the fact that $\Delta\vec{\mu}_{k,i}\in\vec{\mu}_k^{\perp}$, multiplying \eqref{coco} by
$D_k$ and solving for $\Delta\mu_{k,i}$ yields
\begin{align}
\Delta\vec{\mu}_{k,i}&=-(\I_p -D_k\,E\,P_k)^{-1}\,D_k
\begin{pmatrix}
E_{1k}&\ldots&E_{(k-1)k}&0&E_{(k+1)k}&\ldots&E_{pk}
\end{pmatrix}^{\T}\nonumber\\
&=-\sqrt{\dfrac{\lambda_k}{n}}\times
(\I_p - D_k\,E\,P_k)^{-1}\,\begin{pmatrix}
\frac{\sqrt{\lambda_1}}{\lambda_1-\hat{\lambda}_i}N_{1,k}\\
\frac{\sqrt{\lambda_2}}{\lambda_2-\hat{\lambda}_i}N_{2,k}\\
\ldots\\
\frac{\sqrt{\lambda_{k-1}}}{\lambda_{k-1}-\hat{\lambda}_i}N_{(k-1),k}\\
0\\
\frac{\sqrt{\lambda_{k+1}}}{\lambda_{k+1}-\hat{\lambda}_i}N_{(k+1),k}\\
\ldots\\
\frac{\sqrt{\lambda_p}}{\lambda_p-\hat{\lambda}_i}N_{p,k}
\end{pmatrix}\label{leberwurst}\\
&=-\sqrt{\dfrac{\lambda_k}{n}}\times
(\I_p - D_k\,E\,P_k)^{-1}\,|D_k|^{1/2}J\vec{V}_k,\label{leberwurstStar}
\end{align}
where $N_{s,t}$ is as defined
in \eqref{define N}, $\vec{V}_k=(v_{j,k})$ is defined by
\begin{equation}\label{definition of v}
v_{j,k}:=\sqrt{\frac{\lambda_{j}}{\left|\lambda_j-\hat{\lambda}_i\right|}}\times N_{j,k},\quad(j=1,\dots,k-1,k+1,\dots,p),\ v_{k,k}:=0,
\end{equation}
$|D_{k}|$ is the matrix obtained by replacing the coefficients
of $D_{k}$ by their absolute values, and $J$ the diagonal
matrix with coefficients $J_{j,j}:=\sign (D_k)_{j,j}$.

\begin{example}\label{finiteD}
Let $\vec{X}=(X_1,X_2,X_3)$ be a $3$-dimensional random vector with
zero mean and covariance matrix
$\Covtrue=\Diag(\lambda_1,\lambda_2,\lambda_3)$, where
$\lambda_1>\lambda_2>\lambda_3$. Using the notation introduced
earlier, and taking $n$ large enough to guarantee that $1^*=1$ with
high probability, the first line of \eqref{coco} for $k=i=1$
yields with
$\vec{\eta}_1=(1,\eta_{21},\eta_{31})$
\begin{equation*}
\hat{\lambda}_1-\lambda_1=\Delta\lambda_{1}=E_{11}+E_{12}\eta_{21}+E_{13}\eta_{31}=
\dfrac{\lambda_1}{\sqrt{n}} N_{11} +
\frac{\sqrt{\lambda_1}}{\sqrt{n}}\big(\sqrt{\lambda_2}N_{12}\eta_{21}+
\sqrt{\lambda_3}N_{13}\eta_{31}\big),
\end{equation*}
whereas the second and third lines yield
\begin{equation*}
\left[ \left(
\begin{array}{cc}
\lambda_2-\hat{\lambda}_1 & 0  \\
0 & \lambda_3-\hat{\lambda}_1 \\
\end{array}
\right)
+
\left(
\begin{array}{cc}
E_{22} & E_{23} \\
E_{32} & E_{33} \\
\end{array}
\right) \right]
\,
\left(
\begin{array}{c}
\eta_{21} \\
\eta_{31} \\
\end{array}
\right)
= -\left(
\begin{array}{c}
E_{21} \\
E_{31} \\
\end{array}
\right).
\end{equation*}
Equivalently
%----------
\begin{equation*}
\left[ \left(
\begin{array}{cc}
1 & 0  \\
0 & 1 \\
\end{array}
\right)
+
\left(
\begin{array}{cc}
{E_{22}\over \lambda_2-\hat{\lambda}_1} & {E_{23}\over \lambda_2-\hat{\lambda}_1} \\
{E_{32}\over \lambda_3-\hat{\lambda}_1} & {E_{33}\over \lambda_3-\hat{\lambda}_1} \\
\end{array}
\right) \right]
\,
\left(
\begin{array}{c}
\eta_{21} \\
\eta_{31} \\
\end{array}
\right)
= -\left(
\begin{array}{c}
{E_{21}\over \lambda_2-\hat{\lambda}_1} \\
{E_{31}\over \lambda_3-\hat{\lambda}_1} \\
\end{array}
\right).
\end{equation*}
Therefore, with
$$D_1={\rm diag}\big(0,{-1\over \lambda_2-\hat{\lambda}_1},{-1\over
    \lambda_3-\hat{\lambda}_1}\big),\quad P_1={\rm
    diag}\big(0,1,1\big)$$ we obtain
%-------------
\begin{align*}
\Delta\vec{\mu}_{1,1}&=
\left(\begin{array}{c}
0\\
\eta_{21} \\
\eta_{31} \\
\end{array}\right)\,=\left[ \left(
\begin{array}{ccc}
1 & 0 & 0 \\
0 & 1 & 0 \\
0 & 0 & 1 \\
\end{array}
\right)+
\left(
\begin{array}{ccc}
0 & 0 & 0\\
0 &  {E_{22}\over \lambda_2-\hat{\lambda}_1} & {E_{23}\over \lambda_2-\hat{\lambda}_1} \\
0 &  {E_{32}\over \lambda_3-\hat{\lambda}_1} & {E_{33}\over \lambda_3-\hat{\lambda}_1} \\
\end{array}
\right) \right]^{-1} %\times
\left(
\begin{array}{c}
0  \\
{-E_{21}\over \lambda_2-\hat{\lambda}_1} \\
{-E_{31}\over \lambda_3-\hat{\lambda}_1} \\
\end{array} \right)\\
& =[I_3-D_1EP_1]^{-1} %\times
D_1\left(
\begin{array}{c}
0 \\
E_{21} \\
E_{31} \\
\end{array}
\right).\end{align*}

\end{example}

Comparing \eqref{leberwurst} with formula \eqref{FINITE} from the finite-dimensional case, we note the following
differences:
Firstly \eqref{leberwurst} is an exact formula, whilst \eqref{FINITE} is an approximation in the case where $k=i$.
Secondly, the appearance of the term $\Delta\lambda_{k,i}$ in the denominators on the r.h.s.\ of \eqref{leberwurst} is
problematic, unless we can bound $|\lambda_j-\hat{\lambda}_i|$ away from zero. We will fix this choice
by setting $k=i^*$ and conditioning on $i^*$, where the random index $i^*$ is defined in Theorem \ref{main}.
As we will show in Section \ref{technical details}, this yields the following bounds:

\begin{lemma}\label{inflator lemma}
For all $j\neq i^*$, the following hold true,
\begin{align}
\dfrac{1}{\left| \lambda_j-\hat{\lambda}_i\right|}
&\leq\dfrac{2}{\left| \lambda_j-\lambda_{i^*}\right|},\label{inflator}\\
\dfrac{\lambda_j}{\left| \lambda_j-\lambda_{i^*}\right|}&\leq 1+\dfrac{\lambda_{i^*}}{\spectralGap_{i^*}}.
\label{inflationista}
\end{align}
\end{lemma}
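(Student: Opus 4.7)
The plan is to derive both inequalities from two elementary observations: the definition of $i^*$ as the minimizer of $|\lambda_k-\widehat{\lambda}_i|$, and the definition of the spectral gap $\spectralGap_{i^*}=\min_{j\neq i^*}|\lambda_{i^*}-\lambda_j|$. No probabilistic or random-matrix machinery is needed; this is a deterministic consequence of how $i^*$ was chosen.

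For \eqref{inflator}, I would start from the minimality property $|\lambda_{i^*}-\widehat{\lambda}_i|\leq |\lambda_j-\widehat{\lambda}_i|$, valid for every $j$. Applying the triangle inequality
\[
|\lambda_j-\lambda_{i^*}|\leq |\lambda_j-\widehat{\lambda}_i|+|\widehat{\lambda}_i-\lambda_{i^*}|\leq 2|\lambda_j-\widehat{\lambda}_i|
\]
immediately yields $1/|\lambda_j-\widehat{\lambda}_i|\leq 2/|\lambda_j-\lambda_{i^*}|$, which is \eqref{inflator}. The only thing to check is that the denominators are nonzero, which holds because $\lambda_{i^*}$ is non-repeated, so $|\lambda_j-\lambda_{i^*}|\geq\spectralGap_{i^*}>0$ for $j\neq i^*$, and then $|\lambda_j-\widehat{\lambda}_i|\geq\tfrac12|\lambda_j-\lambda_{i^*}|>0$ by the chain just displayed.

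For \eqref{inflationista}, I would split by sign: write $\lambda_j=\lambda_{i^*}\pm|\lambda_j-\lambda_{i^*}|$ according to whether $\lambda_j\gtrless\lambda_{i^*}$, divide through by $|\lambda_j-\lambda_{i^*}|$, and bound $\lambda_{i^*}/|\lambda_j-\lambda_{i^*}|\leq \lambda_{i^*}/\spectralGap_{i^*}$ using the definition of the spectral gap. In the case $\lambda_j\geq\lambda_{i^*}$ we get exactly $\lambda_j/|\lambda_j-\lambda_{i^*}|=1+\lambda_{i^*}/|\lambda_j-\lambda_{i^*}|\leq 1+\lambda_{i^*}/\spectralGap_{i^*}$, and in the case $\lambda_j<\lambda_{i^*}$ we get $\lambda_j/|\lambda_j-\lambda_{i^*}|=\lambda_{i^*}/|\lambda_j-\lambda_{i^*}|-1$, which is even smaller. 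Nonnegativity of the eigenvalues is used implicitly (there is nothing to prove when $\lambda_j=0$, since then the ratio is $0$).

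There is no real obstacle here; the only subtlety worth flagging in the write-up is that \eqref{inflator} requires the implicit hypothesis that $\widehat{\lambda}_i$ is not equal to any eigenvalue of $\Covtrue$ other than possibly $\lambda_{i^*}$ — an event that holds almost surely and is consistent with the earlier requirement that $D_k$ (with $k=i^*$) be well-defined. Both inequalities are then uniform in $\omega$ on this full-measure event.
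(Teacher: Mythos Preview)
Your proof is correct and follows essentially the same route as the paper: the triangle inequality combined with the minimality of $i^*$ for \eqref{inflator}, and the decomposition $\lambda_j=(\lambda_j-\lambda_{i^*})+\lambda_{i^*}$ together with $|\lambda_j-\lambda_{i^*}|\geq\spectralGap_{i^*}$ for \eqref{inflationista}. Your explicit case split by the sign of $\lambda_j-\lambda_{i^*}$ is just an unpacking of the paper's one-line observation that $(\lambda_j-\lambda_{i^*})/|\lambda_j-\lambda_{i^*}|\leq 1$.
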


Thirdly, and most significantly, the term $D_{i^*} E\,P_{i^*}$
appears in the r.h.s.\ of \eqref{leberwurst}. If we managed to
prove that $\|D_{i^*} E\,P_{i^*}\|  < 1$, then by the
Neuman Series Formula,
\begin{equation}\label{run}
(\I_{p}- D_{i^*} E\,P_{i^*})^{-1}=\I_{p} + D_{i^*}E\,P_{i^*} + (D_{i^*}E\,P_{i^*})^2 + (D_{i^*}E\,P_{i^*})^3 +\dots,
\end{equation}
and then we could argue along the lines of the finite-dimensional case.
However, instead of bounding $\|D_{i^*} E\,P_{i^*}\|$, we will bound $\|\Lambda_{i^*}\|$, defined as
\begin{equation}\label{DED}
\Lambda_{i^*}:=|D_{i^*}|^{1/2} E |D_{i^*}|^{1/2},
\end{equation}
where $|D_{i^*}|$ denotes the matrix obtained by replacing the coefficients of $D_{i^*}$ by their absolute values.
By using the properties
\begin{equation*}
D_{i^*}=|D_{i^*}|^{1/2} J |D_{i^*}|^{1/2},\quad P_{i^*}D_{i^*}=D_{i^*},\quad P_{i^*}=|D_{i^*}|^{1/2} |D_{i^*}|^{-1/2},
\end{equation*}
where $|D_{i^*}|^{-1/2}$ is the  diagonal matrix containing
values $\sqrt{|\lambda_j-\hat{\lambda_i}|}$ at positions
$j\not=i^*$, and 0 at the position $i^*$, we easily get
\begin{equation*}
(D_{i^*}E\,P_{i^*})^k=
|D_{i^*}|^{1/2}\left(J\Lambda_{i^*}\right)^k|D_{i^*}|^{-1/2}, \quad k\geq 1.
\end{equation*}
The Neumann series \eqref{run} may thus be rewritten as
\begin{equation}\label{runStar}
(\I_{p} - D_{i^*} E\,P_{i^*})^{-1}=I_p +
|D_{i^*}|^{1/2}\left(\sum_{k=1}^{\infty}\left(J\Lambda_{i^*}\right)^k\right)|D_{i^*}|^{-1/2}
\end{equation}
implying that
$$(I_p -
D_{i^*} \, E \,P_{i^*})^{-1}\,|D_{i^*}|^{1/2}=|D_{i^*}|^{1/2}\big(I_p+\sum_{k=1}^{\infty}\left(J\Lambda_{i^*}\right)^k\big).$$
If $\|\Lambda_{i^*}\|\in(0,1)$, then this series converges, and using \eqref{inflator} in
\begin{equation*}
\||D_{i^*}|^{1/2}\|=\max_{j\neq i^*}\frac{1}{\sqrt{|\lambda_j-\hat{\lambda}_i}|}
\leq \sqrt{\dfrac{2}{\spectralGap_{i^*}}},
\end{equation*}
the taking of norms on both sides of \eqref{leberwurstStar} yields
\begin{equation}\label{soso}
\|\Delta\vec{\mu}_{i^*,i}\|\leq\sqrt{\dfrac{2}{\spectralGap_{i^*}}}\times
\dfrac{1}{1-\|\Lambda_{i^*}\|}\times\sqrt{\dfrac{\lambda_{i^*}}{n}}\|\vec{V}_{i^*}\|.
\end{equation}

A crucial observation is that for $i^*$ fixed, the norm of $\Lambda_{i^*}$
can be bounded by the norm of a matrix, which has an interpretation as covariance matrix estimation error for a multivariate Gaussian random vector with
zero mean, independent coefficients, and whose $j$-th coefficient has variance
\begin{equation}\label{newmatrix}
\dfrac{\lambda_j}{\left| \lambda_j-\lambda_{i^*}\right|},\quad(j\neq i^*).
\end{equation}
Its operator norm can therefore be bounded using the technique of Koltchinskii \& Lounici that allowed for the derivation
of  \eqref{nanana}. To exploit this mechanism, we will have to condition on the value of $i^*$, but for intuitive purposes,
the reader may keep in mind that  for $n$ that satisfy \eqref{improved bound}, we have $i^*=i$ with high probability.

%%%%%%%%%%%%%%%%%%%%%%%%%
\section{Technical Details}\label{technical details}
%%%%%%%%%%%%%%%%%%%%%%%%%

We will now fill in the missing details of the proof we outlined so far. We start with two technical lemmas.

\begin{proof} {\bf (Lemma \ref{inflator lemma})}
By definition of $i^*$, for all $j\neq i^*$ we have

$$| \lambda_j-\lambda_{i^*}|\leq | \lambda_j-\widehat{\lambda}_{i}|+|
\widehat{\lambda}_{i}-\lambda_{i^*}|\leq 2 |
\lambda_j-\widehat{\lambda}_{i}|$$ which shows \eqref{inflator}.
As
\[\frac{\lambda_j}{|\lambda_j -\lambda_{i^*}|}=\frac{\lambda_j-\lambda_{i^*}}{|\lambda_j -\lambda_{i^*}|}+\frac{\lambda_{i^*}}{|\lambda_j -\lambda_{i^*}|}\leq 1+\frac{\lambda_{i^*}}{|\lambda_j -\lambda_{i^*}|},\]
we have also proved the inequality \eqref{inflationista}
\end{proof}

\begin{lemma}\label{rescaling lemma}
Let $\Diag(\nu)$ be a diagonal matrix of size $p$ with diagonal
coefficients $\nu_j\geq 0$, $(j=1,\dots,p)$, and let
$E(\nu)=\Diag(\nu)E\Diag(\nu)$, where
$E=\widehat{\Covtrue}-\Covtrue$ is the covariance matrix error used
throughout this paper. Then for all $t\geq 1$
and $n\in\N,\ n\geq \max(p,t)$ it is true that
\begin{equation*}
\Prob\left[\|E(\nu)\|^2\leq\Xi(n,t,\nu)\right]\geq 1-\e^{-t},
\end{equation*}
where
\begin{equation*}
\Xi(n,t,\nu)=\dfrac{C_1^2 \|\nu^2\lambda\|_\infty}{n}\times\max\left(\|\nu^2\lambda\|_\infty \times t,\sum_{j=1}^p\nu_j^2\lambda_j
\right)
\end{equation*}

with
\[\|\nu^2\lambda\|_\infty=\max_{j=1,2,\ldots,p}\nu_j^2\lambda_j .\]

\end{lemma}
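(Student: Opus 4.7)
The plan is to reinterpret $E(\nu)$ as the sample-covariance estimation error associated with a suitably rescaled Gaussian sample, so that the Koltchinskii--Lounici bound \eqref{nanana} applies directly. Concretely, I would introduce the rescaled row vectors $\widetilde{\vec{Y}}^{(i)}:=\vec{Y}^{(i)}\Diag(\nu)$ for $i=1,\dots,n$. Since $\Diag(\nu)$ is deterministic, the $\widetilde{\vec{Y}}^{(i)}$ are i.i.d.\ Gaussian row vectors of length $p$ with zero mean and population covariance
$$\widetilde{\Covtrue}:=\Diag(\nu)\,\Covtrue\,\Diag(\nu)=\Diag\bigl(\nu_j^2\lambda_j\bigr),$$
which is positive semidefinite (coordinates with $\nu_j=0$ simply become degenerate). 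Their maximum likelihood covariance is
$$\widehat{\widetilde{\Covtrue}}:=\frac{1}{n}\sum_{i=1}^n\bigl[\widetilde{\vec{Y}}^{(i)}\bigr]^{\T}\widetilde{\vec{Y}}^{(i)}=\Diag(\nu)\,\widehat{\Covtrue}\,\Diag(\nu),$$
so the estimation error for the rescaled problem coincides exactly with $E(\nu)=\widehat{\widetilde{\Covtrue}}-\widetilde{\Covtrue}$.

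Next I would apply the bound \eqref{nanana} to this rescaled sample, which is legitimate since the hypothesis $n\geq\max(p,t)$ of Lemma \ref{rescaling lemma} matches the hypothesis of \eqref{nanana} in dimension $p$. The largest eigenvalue of $\widetilde{\Covtrue}$ is $\|\widetilde{\Covtrue}\|=\max_j\nu_j^2\lambda_j=\|\nu^2\lambda\|_\infty$, and its trace equals $\sum_{j=1}^p\nu_j^2\lambda_j$. Substituting these quantities in place of $\lambda_1$ and $\sum_j\lambda_j$ in \eqref{nanana} yields
$$\Prob\!\left[\|E(\nu)\|\leq C_1\sqrt{\dfrac{\|\nu^2\lambda\|_\infty}{n}}\,\max\!\left(\sqrt{\|\nu^2\lambda\|_\infty\,t},\,\sqrt{\sum_{j=1}^p\nu_j^2\lambda_j}\right)\right]\geq 1-\e^{-t}.$$
Squaring both sides and using $\max(a,b)^2=\max(a^2,b^2)$ for $a,b\geq 0$ produces exactly the bound $\|E(\nu)\|^2\leq\Xi(n,t,\nu)$ stated in the lemma.

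I do not foresee any serious obstacle; the argument is essentially a change of variables plus an invocation of \eqref{nanana}. The only subtlety worth flagging is that $\widetilde{\Covtrue}$ may be singular when some $\nu_j$ vanish (this will in fact happen in later applications, where $\nu_{i^*}=0$ so as to match the zero row/column of $|D_{i^*}|^{1/2}$ in \eqref{DED}), but the Koltchinskii--Lounici bound only requires positive semidefiniteness of the population covariance, not invertibility, so the derivation goes through unchanged.
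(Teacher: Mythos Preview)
Your proposal is correct and follows essentially the same approach as the paper: both rescale the data vectors by $\Diag(\nu)$, observe that $E(\nu)$ is precisely the covariance estimation error for the rescaled Gaussian sample with population covariance $\Diag(\nu_j^2\lambda_j)$, and then invoke \eqref{nanana}. The only inaccuracy is your parenthetical remark that in the later application $\nu_{i^*}=0$; in fact the paper sets $\nu_{i^*}=\tilde{\nu}_{i^*}>0$, but this does not affect the proof of the lemma itself.
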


\begin{proof}
Recall that
\begin{equation*}
E = \dfrac{1}{n}\sum_{i=1}^n\left[\vec{Y}^{(i)}\right]^{\T}\vec{Y}^{(i)} - \Covtrue,
\end{equation*}
where $\vec{Y}^{(i)}$ are i.i.d.\ multivariate normal row vectors with zero mean and covariance matrix
$\Covtrue=\Diag(\lambda)$. Therefore,
\begin{equation*}
E(\nu) = \dfrac{1}{n}\sum_{i=1}^n\left[\vec{Z}^{(i)}\right]^{\T}\vec{Z}^{(i)}- \Covtrue(\nu),
\end{equation*}
where $\vec{Z}^{(i)}=\vec{Y}^{(i)}\Diag(\nu)$ are i.i.d.\ multivariate normal random vectors with covariance matrix
$\Covtrue(\nu):=\Diag(\nu)\Covtrue \Diag(\nu)=
\Diag(\nu_1^2\lambda_1,\dots,\nu_p^2\lambda_p)$. Thus, $E(\nu)$ is the
error matrix of the maximum likelihood estimator of $\Covtrue(\nu)$, so that the claim follows from \eqref{nanana}.
\end{proof}

Inequality \eqref{soso} suggests that our aim to prove \eqref{scopeStar} can be achieved by bounding
$\sqrt{\lambda_{i^*}/n}\|\vec{V}_{i^*}\|$ and $\|\Lambda_{i^*}\|$ with high probability. To do this, we need to construct an
appropriate scaling vector $\nu$ and apply Lemma \ref{rescaling lemma}.
Recall that $\vec{V}_{i^*}=(v_{j,i^*})$ satisfies
\begin{align*}
\sqrt{\dfrac{\lambda_{i^*}}{n}}v_{j,i^*}&=\sqrt{\dfrac{\lambda_j\lambda_{i^*}}
{n|\lambda_j-\hat{\lambda_i}}|}\cdot N_{j,i^*}
=\sqrt{\dfrac{1}{|\lambda_j-\hat{\lambda}_i}|}\cdot E_{j,i^*},\quad(j\neq i^*).
\end{align*}
Let $\vec{W}=(w_j)$ be defined by
$w_j:=\tilde{\nu}_{j}\tilde{\nu}_{i^*}E_{j,i^*}$, where
$\tilde{\nu}_{j}:=|\lambda_j-\hat{\lambda}_i|^{-1/2}$ for
$(j\neq i^*)$, and
\[
\tilde{\nu}_{i^*} :=\max_{j\not=i^*} \frac{\sqrt{\lambda_j}}{\sqrt{|\lambda_j-\lambda_{i^*}|}\,\sqrt{\lambda_{i^*}}}.
\]

Under the notation of Lemma \ref{rescaling lemma} the vector $\vec{W}$ is then the $i^*$-th column of $E(\tilde{\nu})$.
By construction, $\tilde{\nu}_{i^*}\cdot\sqrt{\lambda_{i^*}/n}\cdot v_{j,i^*} = w_j$ for $(j\neq i^*)$ and $v_{i^*,i^*}=0$, so that
\begin{equation}
\dfrac{\tilde{\nu}^2_{i^*}}\lambda_{i^*}{n}\|\vec{V}_{i^*}\|^2\leq \|\vec{W}\|^2\leq \|E(\tilde{\nu})\|^2,\label{um}
\end{equation}
Next, recall that
\begin{align*}
\Lambda_{i^*} &= |D_{i^*}|^{1/2}E|D_{i^*}|^{1/2}=P_{i^*}\Diag(\tilde{\nu})E\Diag(\tilde{\nu})P_{i^*}.
\end{align*}
Therefore,
\begin{equation}
\|\Lambda_{i^*}\|^2\leq\|E(\tilde{\nu})\|^2.\label{dois}
\end{equation}
Now let $\nu_{j}:=\sqrt{2}\,|\lambda_j-\lambda_{i^*}|^{-1/2}$
for $(j\neq i^*)$, and $\nu_{i^*}:=\tilde{\nu}_{i^*}$, so that
by virtue of \eqref{inflator}, we have $\nu_j/\tilde{\nu}_j\geq 1$
for all $j$, and
$E(\nu)=\Diag\left((\nu_j/\tilde{\nu}_j)\right)E(\tilde{\nu})\Diag\left((\nu_j/\tilde{\nu}_j)\right)$.
This implies
\begin{equation}\label{treis}
\|E(\tilde{\nu})\|^2\leq\|E(\nu)\|^2.
\end{equation}
%--------------
 In order to prove our main result, we would like to
show that for $n$ large enough we have $\Delta \mu_{i*,i}\leq q$
with a high probability. Taking into account \eqref{um},
\eqref{dois}, \eqref{treis} and \eqref{soso} this is true, if the
conditions
\begin{equation}\label{Enu_norm1}\|E(\nu)\|\leq \frac{1}{2}\end{equation}
and
\begin{equation}\label{Enu_norm2}\|E(\nu)\|\leq q\cdot {\sqrt{\spectralGap_{i^*}}\cdot\tilde{\nu}_{i^{*}}\over 2\sqrt{2}}\end{equation}
hold with a high probability. Lemma \ref{rescaling lemma} gives us hope that both conditions are satisfied with a high probability, if for a sufficiently large $t$ and for $n\geq \max(p,t)$ we have
\[\Xi(n,t,\nu)\leq \min(\frac{1}{4},{\,q^2 \spectralGap_{i^*}\,\tilde{\nu}_{i^{*}}^2\over 8}).\]
By definition of $\nu$,
$\|\nu^2\lambda\|_{\infty}=2\lambda_{i^*}\tilde{\nu}_{i^*}^2$ and by
the definition of $\tilde{\nu}_{i^*}$,
$$\lambda_{i^*}\tilde{\nu}_{i^*}^2=\max_{j\ne
    i}\frac{\lambda_j}{|\lambda_j-\lambda_{i^*}|}\leq
\sum_{j\not=i^*}\frac{\lambda_j}{|\lambda_j-\lambda_{i^*}|},\quad
\lambda_{i^*}\tilde{\nu}_{i^*}^2\leq \big(1+{\lambda_{i^*}\over
    \spectralGap_{i^*}}\big),$$ where   the last inequality follows from  (\ref{inflationista}). Hence
%----------
\begin{align*}
\Xi(n,t,\nu)&=\frac{2C_1^2\lambda_{i^*}\tilde{\nu}_{i^*}^2}{n}\times
\max \Big( 2\lambda_{i^*}\tilde{\nu}_{i^*}^2
t,2\sum_{j\not=i^*}\frac{\lambda_j}{|\lambda_j-\lambda_{i^*}|}+\lambda_{i^*}\tilde{\nu}_{i^*}^2 \Big)\\
&\leq \frac{4C_1^2\lambda_{i^*}\tilde{\nu}_{i^*}^2}{n}\times
\max\Big( \lambda_{i^*}\tilde{\nu}_{i^*}^2
t,\frac{3}{2}\sum_{j\not=i^*}\frac{\lambda_j}{|\lambda_j-\lambda_{i^*}|}\Big)\\
&\leq  \frac{4C^2_1}{n}\times
\big(\lambda_{i^*}\tilde{\nu}_{i^*}^2\big)\times \max\Big(
\big(1+\frac{\lambda_{i^*}}{\spectralGap_{i^*}}\big)
t,\frac{3}{2}\sum_{j\not=i^*}\frac{\lambda_j}{|\lambda_j-\lambda_{i^*}|}
\Big)=:\Phi(n,t,i^*).
\end{align*}
%--
Therefore, we get that both \eqref{Enu_norm1} and
\eqref{Enu_norm2} are satisfied with high probability, if in
addition to $n\geq \max(p,t)$ we have
\[n\geq \Psi(q,t,i^*),\]
where
\[\Psi(q,t,i)=16C_1^2\max\left((1+\frac{\lambda_i}{\spectralGap_{i}} ),
2\frac{\lambda_i}{
    q^2\spectralGap_{i}}\right)\,\max\left((1+\frac{\lambda_i}{\spectralGap_{i}})\,t,{3\over 2}\sum_{j\not=i}\frac{\lambda_j}{|\lambda_j-\lambda_{i}|}\right).\]
%--------
{\bf Remark:} Observe that $\Xi(n,t,\nu)$ could be bounded above
also by
\begin{equation}\label{alt}
\frac{4C^2_1}{n}\times \big(\lambda_{i^*}\tilde{\nu}_{i^*}^2  \big)\times t\times \Big({5\over
    2}\sum_{j\not=i^*}\frac{\lambda_j}{|\lambda_j-\lambda_{i^*}|}\Big)\end{equation}
%-------------
Also note that when ${\lambda_i\over \spectralGap_{i}}\geq 1$, then
$$\max\left((1+\frac{\lambda_i}{\spectralGap_{i}} ),
2\frac{\lambda_i}{q^2\spectralGap_{i}}\right)=2\frac{\lambda_i}{q^2\spectralGap_{i}}$$
and so with the bound (\ref{alt}), $\Psi(q,t,i)$ reduces to
$$80C_1^2t{\lambda_i \over
    q^2\spectralGap_{i}}\Big(\sum_{j\not=i}\frac{\lambda_j}{|\lambda_j-\lambda_i|}\Big).$$
\\\\
%---------------
Note that we can not apply Lemma \ref{rescaling lemma} directly,
since $\nu$ is a random vector. So to apply the lemma, we have to
get rid of randmomness of $\nu$ which we achieve by conditioning
$i^*$.  To simplify notations, let us define the vector $\nu(k)$ by
$\nu_j(k)=\sqrt{2}|\lambda_j-\lambda_k|^{-1/2},\ j\not= k$ and
\[\nu_k(k)=\max_{j\not=k} \frac{\sqrt{\lambda_j}}{\sqrt{\lambda_j-\lambda_{k}}\,\sqrt{\lambda_{k}}}.
\]
Thus, conditional on $i^*=k$ we have $\nu=\nu(k)$
and  $\|E(\nu)\|=\|E(\nu(k))\|$, as well as
$$\Xi(n,t,\nu)=\Xi\left(n,t,\nu(k)\right)\leq\Phi(n,t,k).$$
Thus, for fixed $p\in\N,  t\geq 1,\,  n\geq \max(p,t)$ and
$k\in\{1,\dots,p\}$ we have
\begin{align}
\Prob\left[\left\|E\left(\nu\right)\right\|^2\leq\Phi\left(n,t,i^*\right)\;\Bigr|\;i^*=k\right]&\geq
\Prob\left[\left\|E\left(\nu(k)\right)\right\|^2\leq\Xi\left(n,t,\nu(k)\right)\;\Bigr|\;i^*=k\right]\nonumber\\
 &=\frac{\Prob\left[\{i^*=k\}\setminus  \{\left\|E\left(\nu(k)\right)\right\|^2>\Xi\left(n,t,\nu(k)\right)\}\right]}{\Prob[i^*=k]}\nonumber\\
 &\geq \frac{\Prob\left[i^*=k\right] - \Prob\left[  \left\|E\left(\nu(k)\right)\right\|^2>\Xi\left(n,t,\nu(k)\right)\right]}{\Prob[i^*=k]}\nonumber\\
&\geq1-\dfrac{\min\left(\e^{-t},\;\Prob\left[i^*=k\right]\right)}{\Prob\left[i^*=k\right]}.\label{ahaseli}
\end{align}

We are ready to prove the main theorem:

\begin{proof}{\bf (Theorem \ref{main})}
For all $n,k,t$ and $q$ we have
\begin{equation}
n\geq\Psi(q,t,k)\Rightarrow \Phi(n,t,k) \leq \dfrac{1}{4}\label{lala1}
\end{equation}
and
\begin{equation}\label{lala2}
n\geq\Psi(q,t,k)\Rightarrow \Phi(n,t,k) \leq {q^2 \spectralGap_{k} \nu^2_k(k)\over 8}
\end{equation}
Therefore, we have
\begin{align}
\|E(\nu)\|^2\leq\Phi\left(n,t,i^*\right),\;&n\geq\Psi\left(q,t,i^*\right)\nonumber\\
&\stackrel{\eqref{um},\eqref{dois},\eqref{treis}},\eqref{lala1},\eqref{lala2}{\Rightarrow}
\|\Lambda_{i^*}\|\leq\dfrac{1}{2},\;
\sqrt{\dfrac{\lambda_{i^*}}{n}}\|\vec{V}_{i^*}\|\leq q\sqrt{\spectralGap_{i^*}\over 8}\nonumber\\
&\stackrel{\eqref{soso}}{\Rightarrow}\|\Delta\vec{\mu}_{i^*,i}\|\leq
\sqrt{\dfrac{2}{\spectralGap_{i^*}}}\times\dfrac{1}{1-\frac{1}{2}}\times q\sqrt{\spectralGap_{i^*}\over 8}=q\nonumber\\
&\stackrel{\eqref{halloechen}}{\Rightarrow}\sin\theta\leq q. \label{lala3}
\end{align}
Moreover, since $\Psi(q,t,i^*)$ is stochastic only through it's dependence on $i^*$, we have for $k$ which satisfy $n\geq \Psi(q,t,k)$ the equality
\begin{multline}
\Prob\left[\|E(\nu)\|^2\leq\Phi(n,t,i^*)\,\Bigr|\, i^*=k,\;n\geq\Psi(q,t,i^*)\right]
=\Prob\left[\|E(\nu)\|^2\leq\Phi(n,t,i^*)\,\Bigr|\, i^*=k\right].
\label{lala4}
\end{multline}
This yields (recall that by assumption $n\geq \Psi(q,t,i)$)
\begin{align*}
\Prob&\left[\sin\theta \leq q\;\Bigr|\;n\geq\Psi\left(q,t,i^*\right)\right]\\
&\stackrel{\eqref{lala3}}{\geq}
\Prob\left[\|E(\nu)\|^2\leq\Phi\left(n,t,i^*\right)\;\Bigr|\;n\geq\Psi\left(q,t,i^*\right)\right]\\
&\stackrel{\eqref{lala4}}{\geq}\sum_{k\,:\,n\geq \Psi(q,t,k)} \Prob\left[\|E(\nu)\|^2\leq\Phi\left(n,t,i^*\right)\;\Bigr|\;i^*=k\right]\times
\Prob\left[i^*=k\;\Bigr|\;n\geq\Psi\left(q,t,i^*\right)\right]\\
&\stackrel{\eqref{ahaseli}}{\geq}\sum_{k\,:\,n\geq \Psi(q,t,k)}\left(1-\dfrac{\min\left(\e^{-t},\;\Prob\left[i^*=k\right]\right)}{\Prob\left[i^*=k\right]}\right)
\times\Prob\left[i^*=k\;\Bigr|\;n\geq\Psi\left(q,t,i^*\right)\right]\\
&=1-\sum_{k\,:\,n\geq \Psi(q,t,k)} \dfrac{\min\left(\e^{-t},\;\Prob\left[i^*=k\right]\right)}{\Prob\left[n\geq\Psi(q,t,i^*)\right]}
\times\dfrac{\Prob\left[i^*=k,\;n\geq\Psi\left(q,t,i^*\right)\right]}{\Prob\left[i^*=k\right]}\\
&\geq1-\dfrac{\e^{-t}}{\Prob\left[n\geq\Psi(q,t,i^*)\right]}
\times\left(\sum_{k\,:\,n\geq \Psi(q,t,k)} \Prob\left[n\geq\Psi\left(q,t,i^*\right)\,\Bigr|\,i^*=k\right]\right).
\end{align*}
\end{proof}
\bibliographystyle{abbrv}

\noindent {\bf Acknowledgment:} R.\ Hauser would like to thank the
Alan Turing Institute London for generous support by The Alan Turing
Institute under the EPSRC grant EP/N510129/1. H.\ Matzinger would
like to thank Pembroke College Oxford and the Oxford Mathematical
Institute for hosting him as a Visiting Fellow and Academic Visitor,
respectively, during his sabbatical in 2017. J.\ Lember and R.\
Kangro are grateful for support by Estonian institutional research
funding IUT34-5.

%\bibliography{machinelearning}

%\bibliographystyle{plain}
%\bibliography{machinelearning}
\end{document}